\documentclass[twoside, 12pt]{article}
\usepackage[utf8]{inputenc}
\usepackage[T1]{fontenc}
\usepackage{amssymb,amsmath,amsthm,mathtools,mathrsfs}%, wasysym}
\usepackage{enumitem}
\usepackage[colorlinks,bookmarks,linkcolor=black,citecolor=black]{hyperref}
\usepackage{graphicx}
\usepackage{color}
\usepackage[top=2cm, bottom=2cm, left=2cm, right=2cm]{geometry}
\usepackage{float}
\newcommand{\bd}{\begin{description}}
\newcommand{\ed}{\end{description}}
\newcommand{\bi}{\begin{itemize}}
\newcommand{\ei}{\end{itemize}}
\newcommand{\be}{\begin{enumerate}}
\newcommand{\ee}{\end{enumerate}}
\newcommand{\beq}{\begin{equation}}
\newcommand{\eeq}{\end{equation}}
\newcommand{\beqs}{\begin{eqnarray*}}
\newcommand{\eeqs}{\end{eqnarray*}}

\definecolor{DarkGreen}{rgb}{0.2, 0.6, 0.3}

\newtheorem{theorem}{Theorem}

\newtheorem{lemma}{Lemma}

\newtheorem{claim}{Claim}

\newtheorem{question}{Question}

\def\endofClaim{\hfill\scalebox{.6}{$\blacksquare$}}
\newcommand{\oldqed}{}
\newenvironment{claimproof}[1][Proof]{
  \renewcommand{\oldqed}{\qedsymbol}
  \renewcommand{\qedsymbol}{\endofClaim}
  \begin{proof}[#1]
}{
  \end{proof}
  \renewcommand{\qedsymbol}{\oldqed}
}

\begin{document}
\title{Ramsey size linear and generalization}

\author{Eng Keat Hng\thanks{Extremal Combinatorics and Probability Group (ECOPRO), Institute for Basic Science (IBS), Daejeon, South Korea. Supported by IBS-R029-C4. {\tt hng@ibs.re.kr}}
\and
Meng Ji\footnote{Corresponding author: School of Mathematical Sciences, and Institute of Mathematics and Interdisciplinary Sciences, Tianjin Normal University, Tianjin, China. Supported by the Tianjin Municipal Education Commission Scientific Research Program Project (Grant No. 2025KJ133) and the Institute of Basic Science (IBS-R029-C4).  {\tt
mji@tjnu.edu.cn}}
\and
Ander Lamaison\footnote{Universidad
Pública de Navarra, Pamplona, Spain. Supported by the Institute of Basic Science (IBS-R029-C4).  {\tt
lamaison@mail.muni.cz}}
}
\date{January 8, 2026}
\maketitle

\begin{abstract}
More than thirty years ago, Erd\H{o}s, Faudree, Rousseau, and Schelp posed a fundamental question in extremal graph theory: What is the optimal constant $c_k$ such that 
$r(C_{2k+1}, G) \le c_k m$ for any graph $G$ with $m$ edges and no isolated vertices? In this paper,
we make a significant step towards answering this question by proving that 
$r(C_{2k+1}, G) \le (2 + o(1)) m + p,$
where $p$ denotes the number of vertices in $G$. Additionally, we extend the work of
Goddard and Kleitman and independently Sidorenko, who proved that 
$r(K_3, G) \le 2m + 1$
for any graph $G$ with $m$ edges and no isolated vertices. We generalize their findings to the clique version, establishing that 
$r(K_r, G) \le c_r m^{(r-1)/2}$,
and to the multicolor setting, showing that 
$r_{k+1}(K_3; G) \le c_k m^{(k+1)/2}.$
\\[2mm]
{\bf Keywords:} Ramsey number; Ramsey size linear; multicolor Ramsey number \\[2mm]
{\bf AMS subject classification 2020:} 05C15; 05D10
\end{abstract}

\section{Introduction}
Given two graphs $H$ and $G$, the \emph{Ramsey number} $r(H,G)$ is the least positive integer $N$ such that every red-blue edge coloring of $K_N$ contains a red copy of $H$ or a blue copy of $G$. Ramsey theory is a fundamental area of combinatorics that studies the conditions under which order must appear within chaos. Understanding and studying Ramsey numbers is one of the central problems in combinatorics, and while much progress has been made in special cases, the general problem remains largely unsolved. We refer the reader to the dynamic survey of Radziszowski~\cite{Radziszowski}.

In the early 1980s, Harary conjectured that for any graph $G$ with $m$ edges and no isolated vertices, the Ramsey number $r(K_{3}, G)$ is at most $2m+1$. This conjecture was later proven by Goddard and Kleitman \cite{Goddard-Kleitman}, and independently Sidorenko \cite{Sidorenko} in the 1990s. This result is tight, as equality is attained when $G$ is a tree or a matching. 
Building on this, Erd\H{o}s, Faudree, Rousseau and Schelp \cite{Erdos-Faudree-Rousseau-Schelp} introduced the concept of Ramsey size linear in 1993. 
A graph $H$ is said to be \emph{Ramsey size linear} if there is a constant $C$ such that for any graph $G$ with $m$ edges and no isolated vertices, the Ramsey number $r(H, G)$ is bounded above by $C\cdot m$. In other words, the Ramsey number grows linearly with the number of edges in $G$. 

The study of Ramsey size linear graphs has led to several important results. Erd\H{o}s, Faudree, Rousseau, and Schelp \cite{Erdos-Faudree-Rousseau-Schelp} proved that every graph $H$ with $p$ vertices and $q\geq 2p-2$ edges is not Ramsey size linear and every connected graph $H$ with $p$ vertices and $q\leq p + 1$ edges is Ramsey size linear. Furthermore, they showed that both bounds on $q$ are sharp. Several special classes of graphs were shown to be Ramsey size linear and bounds on the Ramsey numbers were determined; see \cite{Balister-Schelp-Simonovits, Bradac-Gishboliner-Sudakov,Erdos-Faudree-Rousseau-Schelp,  Jayawardene-Rousseau-Bollobas}. 

Erd\H{o}s, Faudree, Rousseau and Schelp \cite{Erdos-Faudree-Rousseau-Schelp} studied the case that $H$ is a cycle, and proved that \emph{if $k\geq2$ and $G$ is a connected graph of size $m$, then for $m$ sufficiently large $r(C_{2k},G)\leq m+22k\sqrt{m}$.} However, analogue upper bound has not been obtianed for the odd cycle, so they posed the following question, which appears as Problems 569 on Bloom’s Erd\H{o}s
problems website \cite{Erdos-1}.
\begin{question}[\cite{Erdos-1,Erdos-Faudree-Rousseau-Schelp}]\label{Q1}
What is the best possible $c_{k}$ such that $r(C_{2k+1},G)\leq c_{k}m$ for any graph $G$ with $m$ edges without isolated vertices?
\end{question}
Recall that the case $k=1$ of Question \ref{Q1} has been solved by
Goddard and Kleitman \cite{Goddard-Kleitman}, and Sidorenko \cite{Sidorenko} independently. 
\begin{theorem}[\cite{Goddard-Kleitman,Sidorenko}]\label{thm1}
For any graph $G$ with $m$ edges and no isolated vertices, the Ramsey number $r(K_{3}, G)$ is at most $2m+1$.
\end{theorem}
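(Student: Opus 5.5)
We argue by induction on $m$, working in a red/blue coloring of $K_N$ with $N=2m+1$ and no red triangle, and try to embed $G$ in blue. The base case $m=1$ is immediate, since $r(K_3,K_2)=3$. The key fact we use throughout is that the red neighbourhood of any vertex $v$ is a blue clique, because a red edge inside it would close a red triangle with $v$. So any vertex of red degree at least $|V(G)|$ already gives a blue copy of $G$. We may therefore assume every vertex has red degree at most $p-1$, where $p=|V(G)|\le 2m$, and hence blue degree at least $N-p$.

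For the inductive step we delete a suitable piece of $G$. First, if $G$ has a vertex $u$ of degree $1$ with neighbour $w$, let $G'=G-u$, removing $w$ too if it becomes isolated. Then $G'$ has at most $m-1$ edges and no isolated vertices, so induction embeds it in blue inside $2(m-1)+1=N-2$ vertices. Since we only need $N-2$ vertices, there is slack; we embed $G'$ in blue on all of $K_N$ minus any two vertices. Now we must extend by finding a blue neighbour of the image of $w$ outside the image of $G'$. This requires the blue degree of $w$'s image to exceed $|V(G')|-1$. Next, for general $G$, we choose a vertex $u$ of minimum degree $d$ and remove $u$ together with its $d$ edges. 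Then $G-u$ has $m-d$ edges and needs $2(m-d)+1$ vertices, leaving $2d$ spare vertices. We then need a common blue neighbour, among the unused vertices, of the $d$ image vertices $N_G(u)$.

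The main obstacle is this extension step. Embedding $G-u$ arbitrarily gives no control over whether its neighbours have a common blue neighbour among the few unused vertices. The first thing to try is strengthening the induction hypothesis: say that for every vertex set $S$ of size at least $2m+1$, and any prescribed ``forbidden'' structure, a blue copy exists. Alternatively, we can use the red-degree bound directly. Each image vertex $x_i$ of a neighbour of $u$ has at most $p-1$ red neighbours, so the $d$ of them together have at most $d(p-1)$ red edges to the rest. We then need more than $d(p-1)$ unused vertices (or a choice among many embeddings) to find a common blue neighbour.

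Since $p$ can be comparable to $2m$, a naive count fails. We would therefore split into cases. In the sparse case $p$ is large, so $G$ has many low-degree vertices; there we greedily embed vertices of degree $\le 1$ last, since a leaf only needs one blue neighbour and blue degrees are at least $N-p\ge 1$. In the dense case $p\le$ roughly $\sqrt{2m}+1$, so the red degrees are tiny and greedy embedding in the blue graph, which has minimum degree $N-p\gg$, succeeds outright. The delicate middle range is the expected difficulty. There, the approach would be a weighted greedy ordering: order vertices by degree and count, for each step, the available vertices minus those excluded by red edges, using $\sum_v \deg(v)=2m$ to show that the total exclusion stays below $N$.
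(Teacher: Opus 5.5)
The paper gives no proof of Theorem~\ref{thm1}; it only cites Goddard--Kleitman and Sidorenko. So your proposal has to stand on its own, and it does not. It is a plan in which the only hard case, graphs of minimum degree at least $2$, is left open, and the tool you suggest for that case cannot work.

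Counting exclusions using only ``every red neighbourhood has at most $p-1$ vertices'' fails at a \emph{single} step. No ordering, and no weighting via $\sum_v\deg(v)=2m$, can rescue it. Take $G=C_p$, so $N=2p+1$. In any order the last vertex has two embedded neighbours, and there are $N-(p-1)=p+2$ unused vertices. The two red neighbourhoods may together contain up to $2(p-1)\ge p+2$ vertices for $p\ge4$, so the count certifies no common blue neighbour. For $d$-regular $G$ the count needs $d(p-1)<(d-1)p+2$, i.e.\ $p\le d+1$, so it fails for every regular graph other than $K_p$.

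This union bound over the neighbours of the last vertex is exactly the device the paper uses for Theorems~\ref{cycle-epsion-p2}--\ref{multi-k3}. For $K_3$ it yields at best about $(\delta(G)+1)(p-1)+1$, which can be as large as $2m+p-\delta(G)$. The additive $p$ in Theorem~\ref{cycle-epsion-p2} is the same loss.

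Your leaf step is also unverified. The inequality you need, $2m+1-p>p-2$, holds only when $p\le m+1$, so you must first reduce to connected $G$. That reduction is easy: embedding one component with $m_1$ edges uses at most $2m_1$ vertices and leaves at least $2(m-m_1)+1$.

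What is missing is an argument that uses the freedom of \emph{where} to place the deleted vertex, instead of counting exclusions after an arbitrary embedding of $G-u$. Here is one concrete reduction. Take $G$ connected and let $v$ be a vertex of maximum degree $D$. Let $L$ be its set of pendant neighbours, and let $G'=G-v-L$, which has $m-D$ edges and no isolated vertices.

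Suppose some vertex $x$ of $K_N$ has red degree $r\le 2D-1$. Its blue neighbourhood then has $2m-r\ge 2(m-D)+1$ vertices. The induction hypothesis, applied to the colouring restricted to that set, embeds $G'$ in blue \emph{inside} it. Mapping $v$ to $x$ makes every edge at $v$ blue automatically. The leaves fit because $r\le p-1\le 2m+1-p$, using that a red degree of at least $p$ already gives a blue $K_p$.

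This disposes of every colouring with a vertex of red degree at most $2\Delta(G)-1$. The remaining case, where every red degree lies in $[2\Delta(G),\,p-1]$, needs a genuinely different idea, and nothing in your proposal addresses it. As written, the proposal does not prove the theorem.
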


As the first result of this paper, we obtain an upper bound on the Ramsey number $r(C_{2k+1},G)$. 
\begin{theorem}\label{cycle-epsion-p2}
For every $k\ge2$ there is a constant $B_k$ such that for any graph $G$ on $p$ vertices with $m$ edges and no isolated vertices we have
\[r(C_{2k+1},G)\leq 2m\left(1+B_km^{-1/20}\right)+p\;.\]
\end{theorem}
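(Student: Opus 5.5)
We argue by induction on $m$, in the spirit of the Goddard--Kleitman/Sidorenko proof of Theorem~\ref{thm1}, but with a threshold. Let $N = \lfloor 2m(1+B_k m^{-1/20})\rfloor + p$. Take a red/blue colouring of $K_N$ with no red $C_{2k+1}$, and suppose there is no blue $G$. The red graph $R$ has no $C_{2k+1}$.

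\textbf{Sparse case.} Suppose $G$ has a vertex $v$ of small degree $d \le m^{1/20}$ (or small relative to $N$). Remove $v$ from $G$, giving $G'$ with $m-d$ edges and at most $p-1$ vertices. By induction $G'$ embeds in blue inside any set of $N-2d-1$ vertices. We then need a vertex outside the embedding whose blue neighbourhood contains the $d$ prescribed images of $N_G(v)$. This forces control on red degrees.

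\textbf{Dense red case.} The key structural fact about $C_{2k+1}$-free red graphs is the following. If a vertex $x$ has large red degree, look at its red neighbourhood $N_R(x)$. A red path of length $2k-1$ inside $N_R(x)$ would close a red $C_{2k+1}$ through $x$. More generally, the odd-cycle-free structure implies that the red graph restricted to $N_R(x)$ is very sparse: it contains no $P_{2k}$, so by Erd\H{o}s--Gallai it has at most $k|N_R(x)|$ edges. Hence $N_R(x)$ is almost entirely blue.

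\textbf{Combining the cases.} The plan is as follows.
\begin{enumerate}
\item If some vertex has red degree at least about $m + p$, we embed $G$ in blue inside its red neighbourhood. Since that set spans at most $O(k)$ red edges per vertex, we can embed greedily after deleting the few vertices of high red degree inside it. The relevant bound is blue-embedding a graph with $m$ edges into a set of size $n$ whose red complement has maximum degree $\Delta$; $n \ge p + O(\Delta\cdot$stuff$)$ suffices.
\item Otherwise every red degree is below $m+p$. In that case we pick the vertex $v$ of $G$ with minimal degree $d$ and apply induction on $G-v$, using at most $2(m-d)(1+\dots)+p-1$ vertices. We leave $2d$ plus an error term of spare vertices. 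For the extension step we need a spare vertex that is blue to $d$ specific vertices. The red neighbourhoods of those $d$ vertices, restricted to the spare set, must fail to cover it. A pure degree count gives $d(m+p)$, which is too big.
\end{enumerate}
So the real argument must be more careful. We should handle high-degree vertices of $G$ by the first mechanism and low-degree vertices by a random/greedy averaging argument. The averaging exploits the fact that in a $C_{2k+1}$-free graph, sets of $d$ vertices have small common structure: Bondy--Simonovits-type counting bounds the number of red paths. This is where the $m^{-1/20}$ loss arises.

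\textbf{Main obstacle.} The extension step is the main obstacle: guaranteeing that the last vertex of $G$ can be placed with the claimed additive $2d$ per step. I would first try to split $V(G)$ into vertices of degree $\ge m^{\alpha}$, of which there are at most $2m^{1-\alpha}$, and the rest. The high-degree vertices would be placed first inside a large, nearly blue red-neighbourhood, using the $P_{2k}$-freeness. The low-degree vertices would then be added one at a time by the Goddard--Kleitman counting, with a cost of $2d$ plus $O(m^{1-\alpha})$ in total. Optimising $\alpha$ against the Erd\H{o}s--Gallai error yields an exponent like $1/20$.
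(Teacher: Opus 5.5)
Your proposal stops exactly where the proof has to happen. You note yourself that step (2) fails. The replacement you sketch (a high/low-degree split of $V(G)$, Bondy--Simonovits counting, random averaging, ``optimising $\alpha$ yields an exponent like $1/20$'') is never turned into an estimate, so the extension step remains unproved.

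The degree count fails because of two avoidable choices, not a real obstruction.
\begin{itemize}
\item Your budget ``we leave $2d$ plus an error term of spare vertices'' confuses the number of vertices needed to \emph{guarantee} a blue copy of $G-v$ with the number such a copy \emph{occupies}. Induction is needed only to find a blue copy $\mathbf{L}$ of $G-v$ somewhere. That copy uses only $p-1$ vertices. Each of the other $N-p+1$ vertices must send a red edge into the set $Y$ of images of $N_G(v)$, since otherwise it extends $\mathbf{L}$. Hence $N-p+1\le\sum_{u\in Y}d_{red}(u)$.
\item Your step (1) should use the sharp threshold $r(P_{2k},G)$ instead of $m+p$. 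The red graph inside any red neighbourhood is $P_{2k}$-free, so every red degree is less than $r(P_{2k},G)$.
\end{itemize}
The ingredient you are missing is the bound $r(P_{\ell},G)\le(1+1/\beta)p+(\beta+1)^2\ell\sqrt m$. This is Lemma~\ref{path--}, proved via Erd\H{o}s--Gallai, Chv\'atal's theorem and a degree-ordering argument. Your own idea of deleting high-red-degree vertices and embedding greedily in a degeneracy order would also give a bound of the form $(1+O(1/\beta))p+O_k(\beta\sqrt m)$. Taking $v$ of minimum degree $d\le 2m/p$, one gets $N<p+d\,r(P_{2k},G)\le p+2m(1+1/\beta)+O_k(d\beta^2\sqrt m)$.

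This still leaves the dense regime, which your plan does not address at all. The error $O_k(d\beta^2\sqrt m)$ is $o(m)$ only when $d\le 2m/p$ is small, i.e.\ when $p$ is polynomially larger than $\sqrt m$. For $G$ with minimum degree near $\sqrt{2m}$, any argument that pays about $p$ per neighbour of the last vertex plus a $\sqrt m$-type loss breaks down.

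The paper handles this with a separate input:
\begin{itemize}
\item If $m\ge(2k-1)p^{1+\max\{1/k,1/19\}}$, then $r(C_{2k+1},G)\le r(C_{2k+1},K_p)\le(2k-1)p^{1+1/k}+(4k-1)p\le N$ by the Erd\H{o}s--Faudree--Rousseau--Schelp bound.
\item Otherwise $p>(m/(2k-1))^{e}$ with $e=\min\{k/(k+1),19/20\}\ge 2/3$, so $d=O_k(m^{1/3})$. With $\beta=m^{1/20}$ the error is $O_k(m^{1/3+1/10+1/2})=O_k(m^{1-1/15})$.
\end{itemize}
The bound $2m(1+O_k(m^{-1/20}))+p$ comes from this case split on the density of $G$, together with the trade-off between the $2m/\beta$ loss and the $d\beta^2\sqrt m$ error. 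No high/low-degree decomposition inside $G$ is needed.
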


We study two kinds of generalizations of Theorem \ref{thm1}, and first prove the following clique version.

\begin{theorem}\label{multi-kr}
For every $r\ge3$ there is a constant $c_r$ such that for any graph $G$ with $m$ edges and without isolated vertices we have
$$r(K_{r},G)\leq c_{r}m^{\frac{r-1}{2}}.$$
\end{theorem}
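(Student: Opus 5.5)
Our plan is to induct on $r$, with Theorem~\ref{thm1} as the base case $r=3$ (there $c_3=3$ suffices, since $2m+1\le 3m$). For the induction step we run the classical Erdős--Szekeres neighbourhood argument, but we split $G$ according to vertex degrees, because a naive induction loses too much. Fix a red-blue colouring of $K_N$ with $N=c_r m^{(r-1)/2}$, and suppose there is no red $K_r$ and no blue $G$.

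\emph{Step 1 (small red degrees).} Every vertex $v$ has red degree below $r(K_{r-1},G)\le c_{r-1}m^{(r-2)/2}$. Otherwise the red neighbourhood of $v$ contains a red $K_{r-1}$, which together with $v$ forms a red $K_r$, or it contains a blue $G$. So the blue graph has minimum degree at least $N-1-c_{r-1}m^{(r-2)/2}$, which is almost all of $N$, and in particular at least $N/2$ once $c_r\gg c_{r-1}$.

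\emph{Step 2 (embedding $G$ greedily).} Let $L$ be the set of vertices of $G$ with degree at least $\sqrt m$. Then $|L|\le 2\sqrt m$. We embed $G$ in the blue graph vertex by vertex, treating the vertices of $L$ first and the rest afterwards, in any order. When we place a vertex $u$, it must avoid the images of earlier vertices and be a blue neighbour of the images of its already-embedded neighbours. Each already-embedded neighbour excludes fewer than $c_{r-1}m^{(r-2)/2}$ candidates through red edges, and at most $p\le 2m$ vertices are already used. For $u\notin L$ there are at most $\sqrt m$ earlier neighbours, so the number of excluded vertices is at most
\[
\sqrt m\, c_{r-1}m^{(r-2)/2}+2m \;=\; c_{r-1}m^{(r-1)/2}+2m \;<\; N .
\]
For $u\in L$, only earlier vertices of $L$ constrain it, and there are at most $2\sqrt m$ of them, so the same bound holds with a factor $2$. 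Hence $c_r=2c_{r-1}+3$ works for $r\ge 4$, using $2m\le m^{(r-1)/2}$ when $r\ge 4$.

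\emph{Main obstacle.} The one delicate point is the ordering: a vertex of very high degree must not be embedded after many of its neighbours. Placing all of $L$ first handles this, because vertices of $L$ only see the at most $2\sqrt m$ earlier vertices of $L$, while every later vertex has degree at most $\sqrt m$. This is exactly where the exponent $\tfrac12$ per step comes from. We should also check that the base case $r=3$ is only used through Theorem~\ref{thm1} and is not needed inside the greedy step.

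If time permits, the argument could be refined using a degeneracy ordering of $G$: $G$ is $\sqrt{2m}$-degenerate, so each vertex has at most $\sqrt{2m}$ earlier neighbours. This gives the same bound more cleanly, with $c_r=\sqrt2\,c_{r-1}+2$.
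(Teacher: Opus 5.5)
Your proposal is correct and is essentially the paper's argument. The paper inducts on $p$: it deletes a minimum-degree vertex $v$, embeds $G-v$ in blue, and observes that every remaining vertex lies in the red neighbourhood (of size less than $r(K_{r-1},G)$) of one of the at most $\sqrt{2m}$ images of $N(v)$; unrolled, this is exactly your greedy embedding along a degeneracy ordering, i.e.\ your suggested refinement. Your heavy-vertices-first ordering costs a factor $2$ instead of $\sqrt2$ but spares you the reduction to connected $G$ and the induction on $p$. One small slip: the inequality $2m\le m^{(r-1)/2}$ you quote fails for small $m$ (e.g.\ $m\le 3$ when $r=4$), but $2m<3m^{(r-1)/2}$ always holds, and that is all your choice $c_r=2c_{r-1}+3$ needs.
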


Secondly, we derive the multicolor setting of Theorem \ref{thm1}. Given graphs $K_{3}$ and $G$, the \emph{multicolor Ramsey number} $r_{k+1}(K_{3};G)$ is the least $N$ such that a copy of $K_{N}$ edge-colored with $k+1$ colors contains a monochromatic $K_{3}$ in one of the first $k$ colors or a monochromatic $G$ in the last color. 

\begin{theorem}\label{multi-k3}
For every $k\ge1$ there is a constant $c_k$ such that for any graph $G$ with $m$ edges and without isolated vertices we have
\[r_{k+1}(K_{3};G)\leq c_{k}m^{\frac{k+1}{2}}.\]
\end{theorem}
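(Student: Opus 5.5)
Let $f_k(m)$ be the maximum of $r_{k+1}(K_3;G)$ over graphs $G$ with $m$ edges and no isolated vertices. The plan is to prove $f_k(m)\le c_k m^{(k+1)/2}$ by induction on $k$, with Theorem~\ref{thm1} as the base case $k=1$ (where $2m+1\le 3m$). Fix $k\ge 2$ and a $(k+1)$-colouring of $K_N$ with $N=c_k m^{(k+1)/2}$. Assume it has no monochromatic triangle in colours $1,\dots,k$ and no copy of $G$ in colour $k+1$. The key step is to split $G$ according to degrees. Let $L$ be the set of vertices of $G$ of degree at least $\sqrt m$, so $|L|\le 2\sqrt m$. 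Let $S$ be the rest, so every vertex of $S$ has degree less than $\sqrt m$ and $G[S]$ has at most $m$ edges.

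For the low-degree part we use the following fact: for every vertex $v$ and every colour $i\le k$, the colour-$i$ neighbourhood $N_i(v)$ has no edge of colour $i$. Otherwise we would get a colour-$i$ triangle. So $N_i(v)$ is coloured with $k$ colours: $k-1$ triangle-free colours and colour $k+1$. By induction, if $|N_i(v)|\ge f_{k-1}(m)$, then $N_i(v)$ contains a colour-$(k+1)$ copy of $G$, a contradiction. Hence every vertex has colour-$i$ degree less than $c_{k-1}m^{k/2}$ for each $i\le k$. So in colour $k+1$ every vertex misses fewer than $k c_{k-1}m^{k/2}$ vertices. Thus the colour-$(k+1)$ graph $R$ is very dense: its complement has maximum degree $\Delta<kc_{k-1}m^{k/2}=N/\big(m^{1/2}\cdot c_k/(kc_{k-1})\big)$.

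Next we embed $G$ greedily into $R$. First embed $L$: the common $R$-neighbourhood of $j$ embedded vertices has size at least $N-j\Delta$. Since $|L|\le 2\sqrt m$, after embedding all of $L$ the common neighbourhood still has size at least $N-2\sqrt m\,\Delta$. This is at least $N/2$ once $c_k\ge 4kc_{k-1}$. Then embed the vertices of $S$ one at a time. Each vertex of $S$ has fewer than $\sqrt m$ previously embedded neighbours in $S$, plus its neighbours in $L$, which are handled because we work inside the common neighbourhood of the image of $L$. A crude bound only restricts to the common neighbourhood of the images of its actual neighbours in $L$. The available set therefore has size at least $N-(|L|+\sqrt m)\Delta-p\ge N-3\sqrt m\Delta-2m$. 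This is positive for $c_k$ large, say $c_k=8kc_{k-1}+4$. This completes the embedding and gives the contradiction.

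The main obstacle is to make the degree threshold balance correctly. A vertex of $G$ can have up to $m$ neighbours in total, so naive greedy embedding would need $N>m\Delta$, which costs a factor $m$ instead of $\sqrt m$. The split at degree $\sqrt m$ resolves this. High-degree vertices are few ($O(\sqrt m)$), and each low-degree vertex has fewer than $\sqrt m$ back-neighbours. So in either case a vertex faces at most $O(\sqrt m)$ constraints, each excluding at most $\Delta$ vertices. We order the embedding as all of $L$ first, then $S$. Each vertex is placed adjacent, in $R$, to all its already-embedded neighbours, so only $O(\sqrt m)$ neighbourhoods are intersected. This yields the extra factor $\sqrt m$ per colour and hence the exponent $(k+1)/2$.
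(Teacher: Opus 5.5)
Your proof is correct. Its key step is the same as the paper's: for any vertex $u$ and colour $i\le k$, the colour-$i$ neighbourhood of $u$ has no colour-$i$ edge. It is therefore effectively $k$-coloured, with $k-1$ triangle-free colours plus colour $k+1$, so it has fewer than $r_k(K_3;G)\le c_{k-1}m^{k/2}$ vertices.

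Where you differ is in how the embedding is organised.

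\textbf{The paper's route.} It runs a double induction on $k$ and $p=|V(G)|$. It deletes a minimum-degree vertex $v$, so $d(v)\le\sqrt{2m}$, and embeds $G-v$ in colour $k+1$ by induction on $p$. It then applies the neighbourhood bound only to the at most $\sqrt{2m}$ images of $N_G(v)$, whose non-$(k+1)$ neighbourhoods must jointly cover all $N-p+1$ unused vertices.

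\textbf{Your route.} You apply the bound to every vertex at once, so the complement of colour $k+1$ has maximum degree $\Delta<kc_{k-1}m^{k/2}$. You then embed $G$ greedily, high-degree vertices first, so that each vertex has $O(\sqrt m)$ previously embedded neighbours.

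\textbf{Comparison.} The paper's induction on $p$ is in effect a greedy extension along a minimum-degree deletion (degeneracy) order, so the counting is the same in substance. Your version needs only induction on $k$. It also dispenses with three pieces the paper requires: the reduction to connected $G$ (needed there so that $G-v$ has no isolated vertices), the disjoint-union inequality, and the Greenwood--Gleason base case $p=2$. In return, the paper's bookkeeping charges only $\sqrt{2m}$ constraints per new vertex, against your $3\sqrt m$, which affects only the constant.

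Two small remarks on your write-up:
\begin{enumerate}
\item The split into $L$ and $S$ can be replaced by any degeneracy ordering, since every graph with $m$ edges is $\sqrt{2m}$-degenerate.
\item A vertex of $S$ has fewer than $\sqrt m$ neighbours in total, including those in $L$. So the term $(|L|+\sqrt m)\Delta$ in your availability count can be replaced by $\sqrt m\,\Delta$.
\end{enumerate}
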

\noindent {Note that the Ramsey number on $K_{3}$ versus $G$ is $r_{2}(K_{3};G)=r(K_{3},G)$. }

\noindent {\bf Remark.} After submitting our paper on January 8, 2026, we discovered that \cite{Cambie-Freschi-Morawski-Petrova-Pokrovskiy} independently obtained a result similar to Theorem 2 on arXiv which solved Problems 570 of Bloom’s Erdős problems.
\section{Proofs}\label{sec2}
In this section, we give the proofs of Theorems 
\ref{cycle-epsion-p2}, \ref{multi-kr} and \ref{multi-k3}. Before proving the above three theorems, we shall show a useful lemma, whose proof needs the following known results.

\begin{theorem}[\cite{Chvatal}]\label{chavtal}
For every tree $T$ on $n$ vertices and every positive integer $p$, we have
\[r(T,K_{p})=(n-1)(p-1)+1.\]
\end{theorem}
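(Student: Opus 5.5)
We prove Chvátal's theorem by establishing matching lower and upper bounds. The case $n=1$ or $p=1$ is trivial: both sides equal $1$, since a single vertex is a red $T$, and $K_1$ is always present in blue. So we assume $n,p\ge 2$ throughout.

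\emph{Lower bound.} We exhibit a colouring of $K_{(n-1)(p-1)}$ with no red $T$ and no blue $K_p$. Partition the vertex set into $p-1$ classes of size $n-1$. Colour an edge red if it lies inside a class, and blue if it joins two classes. Every red component has only $n-1$ vertices, so there is no red copy of the connected graph $T$. A blue clique meets each class at most once, so it has at most $p-1$ vertices. Hence $r(T,K_p)>(n-1)(p-1)$.

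\emph{Upper bound.} We induct on $p$ to show that any red-blue colouring of $K_N$ with $N=(n-1)(p-1)+1$ contains a red $T$ or a blue $K_p$. For $p=2$ we have $N=n$. If some edge is blue, it is a blue $K_2$; otherwise the colouring is a red $K_n$, which contains $T$. For $p\ge 3$, assume there is no blue $K_p$. Let $v$ be any vertex and let $B(v)$ be its blue neighbourhood. The colouring restricted to $B(v)$ has no red $T$ (otherwise we are done) and no blue $K_{p-1}$, since such a clique together with $v$ would form a blue $K_p$. By the induction hypothesis, $|B(v)|\le (n-1)(p-2)$. Consequently every vertex has red degree at least
\[
N-1-(n-1)(p-2)=n-1 .
\]

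\emph{Embedding the tree.} It remains to show that a graph of minimum degree at least $n-1$ contains every tree on $n$ vertices. We embed $T$ greedily, adding one vertex at a time in an order where each new vertex is a leaf attached to an already embedded vertex $u$. At each step fewer than $n$ vertices are used, so at most $n-2$ of them are neighbours of the image of $u$ other than its own image. Since the image of $u$ has at least $n-1$ red neighbours, some red neighbour is still free, and we map the new leaf there. This produces a red copy of $T$ and completes the upper bound.

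This last greedy step is the only nontrivial point, and it is a standard argument. The main thing to check carefully is the degree count in the induction, and the small-$p$ base case, which is immediate.
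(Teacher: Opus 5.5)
Your proof is correct and complete. The block colouring gives the lower bound. The induction on $p$ correctly shows that, in a colouring with no red $T$ and no blue $K_p$, every blue neighbourhood has at most $(n-1)(p-2)$ vertices, so every red degree is at least $n-1$. The leaf-by-leaf greedy embedding then produces a red $T$.

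There is no proof in the paper to compare against: it quotes this result from Chvátal as a black box. It also uses only the upper bound, in Lemma~\ref{path--}, to find a large blue clique in $\mathbf{H}$. Your argument is the standard proof of the theorem.
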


\begin{theorem}[\cite{Erdos-Gallai}]\label{E-G-thm}
For $p\geq k\geq 3$ every graph on $p$ vertices with at least $(k-1)(p-1)/2$ edges contains a path of length $k-1$.
\end{theorem}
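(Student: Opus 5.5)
We will derive the statement from the classical form of the Erd\H{o}s--Gallai bound: \emph{if a graph $H$ on $n$ vertices contains no path with $\ell$ edges, then $e(H)\le (\ell-1)n/2$.} We apply it with $\ell=k-1$. A graph on $p$ vertices with no path of length $k-1$ then has at most $(k-2)p/2$ edges. Since $p\ge k$, we have
\[(k-1)(p-1)-(k-2)p=p-k+1>0,\]
so $(k-1)(p-1)/2>(k-2)p/2$. Hence a graph with at least $(k-1)(p-1)/2$ edges must contain the required path.

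To prove the classical bound we induct on $n$. We may assume $H$ is connected, because the bound is additive over components. If $n\le \ell$, then trivially $e(H)\le n(n-1)/2\le(\ell-1)n/2$. Otherwise $n\ge \ell+1$. If some vertex $v$ has $\deg v\le (\ell-1)/2$, delete it and apply induction to $H-v$, which also has no path with $\ell$ edges. This gives $e(H)\le (\ell-1)(n-1)/2+(\ell-1)/2=(\ell-1)n/2$. Note that $H-v$ may be disconnected, but that is harmless since the bound is additive. In the remaining case $\delta(H)\ge \ell/2$, i.e.\ $2\delta\ge \ell$. We then want to show that $H$ contains a path with at least $\min(n-1,2\delta)\ge \ell$ edges, which contradicts the hypothesis.

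The main step is this path lemma: a connected graph with minimum degree $\delta$ on $n$ vertices contains a path with at least $\min(n-1,2\delta)$ edges. To prove it, take a longest path $P=v_0v_1\cdots v_t$. By maximality, all neighbours of $v_0$ and of $v_t$ lie on $P$. Suppose $t<\min(n-1,2\delta)$. Let
\[A=\{i: v_0v_{i+1}\in E\},\qquad B=\{i: v_tv_i\in E\},\]
both subsets of $\{0,\dots,t-1\}$. Then $|A|+|B|\ge 2\delta>t$, so by pigeonhole there is an index $i\in A\cap B$. Then
\[v_0v_1\cdots v_i\,v_t v_{t-1}\cdots v_{i+1}v_0\]
is a cycle through all $t+1$ vertices of $P$. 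Since $t+1<n$ and $H$ is connected, some vertex outside $P$ is adjacent to a vertex of this cycle. Opening the cycle at that vertex yields a path with $t+1$ edges, contradicting maximality. Thus $t\ge\min(n-1,2\delta)$, which completes the proof.

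The only delicate point is the bookkeeping in the pigeonhole step, namely checking that the index sets live in $\{0,\dots,t-1\}$ so that $2\delta>t$ forces an overlap. The rest is routine.
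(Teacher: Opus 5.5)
Your proof is correct. There is no argument in the paper to compare it with step by step: the paper does not prove this statement, but quotes it from Erd\H{o}s and Gallai and uses it as a black box. It is used only in Lemma~\ref{path--}, to find a red $P_\ell$ once the red graph has at least $\ell N/2$ edges. Your proposal therefore replaces the citation with a self-contained argument.

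You reduce the stated form to the sharp path bound $e(H)\le(\ell-1)n/2$ for graphs with no path of $\ell$ edges. You prove that bound by induction: split into components, delete a vertex of degree at most $(\ell-1)/2$, and otherwise apply Dirac's lemma that a connected graph has a path with $\min(n-1,2\delta)$ edges. The details check out:
\begin{itemize}
\item $|A|=\deg v_0$ and $|B|=\deg v_t$, because every neighbour of an endpoint of a longest path lies on that path.
\item Integrality turns $\delta>(\ell-1)/2$ into $2\delta\ge\ell$.
\item The degenerate cases $t\le 1$, where the ``cycle'' would not be a genuine cycle, cannot occur for a longest path in a connected graph with $\delta\ge 1$ and $n\ge 3$.
\end{itemize}

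Beyond replacing the citation, your route makes the constants transparent. It shows the true threshold is $(k-2)p/2$, so the paper's $(k-1)(p-1)/2$ carries slack $(p-k+1)/2$. It also shows that the hypothesis $p\ge k$ is used only to make this slack positive, which is what allows ``at least'' rather than ``more than'' in the statement.

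That is also why reducing through paths is the right choice. The $(k-1)(p-1)/2$ formulation has the shape of the Erd\H{o}s--Gallai circumference theorem, but that theorem needs strictly more than $(k-1)(p-1)/2$ edges. For example, copies of $K_{k-1}$ sharing one vertex attain equality and have no cycle of length at least $k$. So the circumference theorem would not directly give the statement as written.
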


Now we prove the lemma. Here we denote the path on $n$ vertices by $P_{n}$.

\begin{lemma}\label{path--}
For every integer $\ell\ge4$, every real number $\beta\ge3$ and every graph $G$ on $p$ vertices with $m$ edges and no isolated vertices, we have
\[r(P_{\ell},G)\leq\left(1+\frac{1}{\beta}\right)p+(\beta+1)^{2}\ell\sqrt{m}\;.\]
\end{lemma}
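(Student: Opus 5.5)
The plan is to fix $N=\lfloor(1+\frac1\beta)p+(\beta+1)^2\ell\sqrt m\rfloor+1$ and a red/blue colouring of $K_N$ with no red $P_\ell$, and to embed $G$ greedily in blue, using that the red graph is sparse. By Theorem~\ref{E-G-thm} with $k=\ell$, applicable since $\ell\ge4$ and $N\ge\ell$, the red graph has fewer than $(\ell-1)(N-1)/2$ edges. Its average degree is therefore less than $\ell-1$. Set $D=(\beta+1)(\ell-1)$. Then the set $R$ of vertices of red degree at least $D$ satisfies $|R|\cdot D\le 2e(\text{red})<(\ell-1)N$, so $|R|<N/(\beta+1)$. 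We discard $R$ and work in $W=V(K_N)\setminus R$, where
\[
|W|>\frac{\beta}{\beta+1}N \quad\text{and every vertex of } W \text{ has red degree} <D .
\]

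Next we split $V(G)$ by degree. Call a vertex \emph{heavy} if its degree is at least $\sqrt m$. Since $\sum \deg =2m$, there are at most $h\le 2\sqrt m$ heavy vertices. We order $V(G)$ with all heavy vertices first, then the light ones in arbitrary order. In this order, each vertex has fewer than $h+\sqrt m\le 3\sqrt m$ earlier neighbours. For a heavy vertex, its earlier neighbours are heavy, so there are at most $h-1$ of them. For a light vertex, its whole degree is below $\sqrt m$.

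We then embed the vertices one at a time into $W$. When embedding $v$, a candidate $w\in W$ must be unused and must not be red-adjacent to the image of any earlier neighbour of $v$. Each such image lies in $W$ and so has fewer than $D$ red neighbours. Hence the number of forbidden candidates is less than
\[
p+3\sqrt m\,D = p+3(\beta+1)(\ell-1)\sqrt m .
\]
It therefore suffices that $\frac{\beta}{\beta+1}N\ge p+3(\beta+1)\ell\sqrt m$, that is,
\[
N\ge \left(1+\frac1\beta\right)p+\frac{3(\beta+1)^2}{\beta}\,\ell\sqrt m .
\]
Because $\beta\ge3$ we have $3/\beta\le1$, so this is implied by the choice of $N$. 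The greedy embedding thus always succeeds and produces a blue copy of $G$. Theorem~\ref{chavtal} is not needed in this route; alternatively, it could be used to place the heavy vertices inside a blue clique.

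The main point requiring care is the choice of the two thresholds. The red-degree cutoff $D$ must be large enough that the discarded set costs only a $\frac1{\beta+1}$ fraction, which produces the factor $1+\frac1\beta$ on $p$. The heavy/light threshold $\sqrt m$ must balance the number of heavy vertices against the degree of light vertices, so that the back-degree is $O(\sqrt m)$. Beyond that, the only thing to check is that the constants fit under $(\beta+1)^2\ell$, which uses $\beta\ge3$.
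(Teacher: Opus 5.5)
Your proof is correct, and after the common first step it takes a more direct route than the paper. Both arguments begin the same way: Erd\H{o}s--Gallai bounds the number of vertices of red degree at least about $(\beta+1)\ell$ by $N/(\beta+1)$, and discarding them produces the factor $1+\frac1\beta$.

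From there the paper orders $V(G)$ by decreasing degree and takes a blue copy of a maximal prefix $G_r$. It derives a contradiction from the fact that every remaining vertex sends a red edge into the set $W$ of images of the earlier neighbours of $v_{r+1}$. To bound $|W|$ it splits into two cases. For $r\le p/2$ it uses Chv\'atal's theorem to force $r\ge \beta N/((\beta+1)\ell)$, which makes $d_G(v_{r+1})<2m/r$ small. For $r>p/2$ it uses $|W|<4m/p$ together with $p>\sqrt{2m}$.

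The paper's double count over $W$ is exactly your forbidden-candidate count in contrapositive form. The real difference is how the number of earlier neighbours is bounded. Your heavy/light ordering gives a uniform $O(\sqrt m)$ back-degree, i.e.\ $G$ is $O(\sqrt m)$-degenerate. This removes both Chv\'atal's theorem and the case analysis. (Even in the paper's ordering one has $|W|\le\min\{r,2m/(r+1)\}\le\sqrt{2m}$, so the split was avoidable there too.)

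Your version also leaves visible slack: your back-degree is in fact $<2\sqrt m$, and your cutoff uses $\ell-1$. The paper's route gains nothing in strength over yours; it is just heavier bookkeeping.

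One small fix. With $N=\lfloor X\rfloor+1$, where $X=(1+\frac1\beta)p+(\beta+1)^2\ell\sqrt m$, you only prove $r(P_\ell,G)\le\lfloor X\rfloor+1$. Since Ramsey numbers are integers, you must run the argument with $N=\lfloor X\rfloor$. Your slack absorbs this. With $N=\lfloor X\rfloor>X-1$ you get $|W|>\frac{\beta}{\beta+1}N> p+\beta(\beta+1)\ell\sqrt m-1$. Because $\beta\ge3$ and $3(\beta+1)\sqrt m\ge 12$, this is at least $p+3(\beta+1)(\ell-1)\sqrt m$. That quantity exceeds your actual forbidden count, so the greedy step still always succeeds.
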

\begin{proof}
Let $N=\left(1+\frac{1}{\beta}\right) p+(\beta+1)^{2}\ell\sqrt{m}$. Take a red/blue-colored copy $\mathbf{K}$ of $K_{N}$ and suppose that it contains no red copy of $P_{\ell}$. Writing $d_{red}(v;\mathbf{K})$ for the red degree of $v\in V(\mathbf{K})$ in $\mathbf{K}$, let
\[L=\left\{u\in V(\mathbf{K})~|~d_{red}(u;\mathbf{K})\geq (\beta+1)\ell\right\}\;.\]
Note that $|L|\le\frac{N}{\beta+1}$. Indeed, suppose not. Then the red subgraph of $\mathbf{K}$ has at least $\ell N/2$ edges. Hence, it contains a red copy of $P_{\ell}$ by Theorem~\ref{E-G-thm}, giving a contradiction. Let $\mathbf{H}$ be obtained from $\mathbf{K}$ by deleting the vertices in $L$. Note that $|V(\mathbf{H})|\ge\frac{\beta N}{\beta+1}$ and for all $v\in V(\mathbf{H})$ we have red degree $d_{red}(v;\mathbf{H})\le(\beta+1)\ell$ in $\mathbf{H}$.

We shall show that $\mathbf{H}$, and so $\mathbf{K}$, contains a blue copy of $G$. First we enumerate the vertices of $G$ in decreasing degree order as $v_1,\dots,v_p$, that is, we have $d_{G}(v_{1})\geq d_{G}(v_{2})\geq \ldots \ge d_{G}(v_{p})$. For $1\le r\le p$ let $G_{r}=G[\{v_{1},\ldots,v_{r}\}]$. Now suppose for a contradiction that $\mathbf{H}$ contains a blue copy $\mathbf{L}$ of $G_{r}$ but not a blue copy of $G_{r+1}$. Write $W$ for set of vertices in $\mathbf{L}$ corresponding to the neighbors of $v_{r+1}$ in $G_{r+1}$. Since $\mathbf{H}$ contains no blue copy of $G_{r+1}$, we immediately have the following observation.
\begin{equation}\label{observe}
\text{\it{Each}}~ v\in V(\mathbf{H})\setminus V(\mathbf{L})~\text{\it{is joined to}}~W~\text{\it{by at least one red edge}.}\tag{A}
\end{equation}
To finish the proof, we consider the following two cases.
\vskip 2mm
{\bf Case 1.} 
$1\le r\leq \frac{p}{2}$.

The graph $\mathbf{H}$ contains no red copy of $P_{\ell}$, so by Theorem~\ref{chavtal} it contains a blue copy of $K_{\frac{\beta N}{(\beta+1)\ell}}$. In particular, we have $r\ge\frac{\beta N}{(\beta+1)\ell}$. Now we use a double counting argument on $D = \sum_{w\in W}d_{red}(w;\mathbf{H})$ to obtain a lower bound on $W$. On the one hand, Observation~\ref{observe} tells us that there are at least
\[N-|L|-r\ge \frac{p}{2}+(\beta+1)\beta\ell\sqrt{m}\]
red edges coming out from $W$, so we have $D\ge \frac{p}{2} + (\beta+1)\beta\ell\sqrt{m}$. On the other hand, we have $d_{red}(v;\mathbf{H})\le(\beta+1)\ell$ for all $v\in V(\mathbf{H})$, so $D \le |W|(\beta+1)\ell$. Hence, we obtain
\begin{equation} \label{eq:W-lower}
|W|\ge \frac{\frac{p}{2}+(\beta+1)\beta\ell\sqrt{m}}{(\beta+1)\ell}\ge \beta\sqrt{m}\;.
\end{equation}
We shall also obtain an upper bound on $|W|$. Since the vertices of $G$ were enumerated in decreasing degree order, we have
\begin{equation} \label{eq:W-upper}
|W| = d_{G}(v_{r+1}) < \frac{2m}{r} \le \frac{2m(\beta+1)\ell}{\beta N} \le \frac{2\sqrt{m}}{\beta(\beta+1)}\;.
\end{equation}
Combining~\eqref{eq:W-lower} and~\eqref{eq:W-upper} gives $\beta^2(\beta+1)<2$, which contradicts $\beta\ge3$.

\vskip 2mm
{\bf Case 2.} 
$\frac{p}{2} < r < p$.

In this case, we have $|V(\mathbf{H})\setminus V(\mathbf{L})| > \beta(\beta+1)\ell\sqrt{m}$. By Observation~\ref{observe} and the pigeonhole principle, there is a vertex $v\in W$ with $d_{red}(v;\mathbf{H}) > \frac{\beta(\beta+1)\ell\sqrt{m}}{|W|}$. Since the decreasing degree order of $G$ gives $|W|\le2m/r$ and we have $d_{red}(v;\mathbf{H})\le(\beta+1)\ell$, we obtain
\[(\beta+1)\ell \ge d_{red}(v;\mathbf{H}) > \frac{\beta(\beta+1)\ell\sqrt{m}}{|W|} > \frac{\beta(\beta+1)\ell p}{4\sqrt{m}} > \frac{\beta(\beta+1)\ell}{2\sqrt{2}}\;,\]
where the final inequality uses $p>\sqrt{2m}$. But this gives a contradiction because $\beta\ge3$. This completes the proof of Lemma \ref{path--}.
\end{proof}

\subsection{Proof of Theorem \ref{cycle-epsion-p2}}
The following result will be used in the proof of Theorem \ref{cycle-epsion-p2}.
\begin{theorem}[\cite{Erdos-Faudree-Rousseau-Schelp1978}]\label{Erdos-Faudree-Rousseau-Schelp-1978}
For all $t\geq 3$ and $p\geq 2$, writing $\ell=\left\lfloor\frac{t-1}{2} \right\rfloor$, we have
\[r(C_{t},K_{p})\leq \left(t-2\right)p^{1+\frac{1}{\ell}}+(2t-3)p.\]
\end{theorem}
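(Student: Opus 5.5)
Since the red graph is the only object that matters, I would work with a red/blue colouring of $K_N$ containing no red $C_t$ and no blue $K_p$. Write $R$ for the red graph: it is $C_t$-free and has independence number $\alpha(R)<p$. The plan is a degeneracy argument. Set $d=(t-2)p^{1/\ell}+(2t-4)$.

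The first step is the key lemma: every subgraph $R'\subseteq R$ has a vertex of degree at most $d$ in $R'$. Assuming this, $R$ is $d$-degenerate, so greedily picking a vertex of minimum degree and deleting it with its neighbours gives
\[\alpha(R)\ge \frac{N}{d+1}.\]
With $N=(t-2)p^{1+1/\ell}+(2t-3)p=(d+1)p$ this yields $\alpha(R)\ge p$, a blue $K_p$ and a contradiction. So the whole theorem reduces to the lemma.

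To prove the lemma, I suppose some $R'$ has minimum degree $\delta>d$, fix a vertex $v$, and take the breadth-first layers $N_0=\{v\},N_1,\dots,N_\ell$ of $R'$ from $v$. The number $\ell=\lfloor (t-1)/2\rfloor$ is exactly the depth for which a cycle through the BFS tree, using two tree paths of length at most $\ell$ plus a connecting path inside the layers, can be made to have length exactly $t$. The structural claim, in the style of Bondy--Simonovits, is this: for $i\le \ell$, the graph induced on $N_i$ together with the edges between $N_{i}$ and $N_{i+1}$ contains no path with more than roughly $t-2$ edges. Otherwise, joining its endpoints back to a common ancestor in the BFS tree and shortening along the path gives cycles of every length in an interval containing $t$.

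Given the claim, each layer induces a graph with no long path. Then Theorem~\ref{E-G-thm} makes it sparse (average degree below about $t-2$), and an independent set inside $N_i$ has size at least about $|N_i|/(t-2)$. Since $\alpha<p$, this gives $|N_i|\lesssim (t-2)p$. Meanwhile the minimum degree forces expansion. Each vertex of $N_i$ sends fewer than about $2t-4$ of its at least $\delta$ edges inside $N_{i-1}\cup N_i$, so most of its edges go to $N_{i+1}$. The sparse bipartite structure between consecutive layers then turns this into
\[|N_{i+1}|\ge \frac{\delta-(2t-4)}{t-2}\,|N_i|.\]
Iterating $\ell$ times gives $\bigl((\delta-2t+4)/(t-2)\bigr)^{\ell}\lesssim p$, i.e.\ $\delta\le (t-2)p^{1/\ell}+2t-4=d$, which is the contradiction.

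The main obstacle is the structural claim together with extracting the exact constants $(t-2)$ and $(2t-3)$. The parity of $t$ matters: for even $t$ the layer paths must close up at the right length, and one must make sure the two tree paths to the common ancestor are internally disjoint. I would first try the standard "theta-graph" lemma: a bipartite graph with a long path and a spanning tree rooted appropriately contains cycles of all even lengths up to a bound. The odd case would be handled by edges inside a single layer. If the exact constants resist, I would settle for a slightly weaker bound of the same shape, which is all that the later applications in Theorem~\ref{cycle-epsion-p2} actually need.
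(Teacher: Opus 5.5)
Your reduction to degeneracy cannot work, because the key lemma is false for every odd $t\ge5$ once $p$ is large. Take $R=K_{p-1,p-1}$, the red graph of a colouring of $K_{2p-2}$. It is bipartite, hence $C_t$-free, and $\alpha(R)=p-1<p$. Yet every vertex has degree $p-1$, which exceeds $d=(t-2)p^{1/\ell}+2t-4\le(t-2)\sqrt p+2t-4$ as soon as $p$ is large compared with $t$. Your argument for the lemma is local: it uses only $C_t$-freeness and $\alpha<p$ inside $R'$, never the value of $N$. So it would apply verbatim to this $R$.

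The example also shows exactly which steps break. Root the BFS at $v$; then $N_1$ is the opposite side and $N_2$ is the rest of $v$'s side.
\begin{itemize}
\item The edges between $N_1$ and $N_2$ form a complete bipartite graph full of long paths, and closing them up through the BFS tree only ever produces even cycles.
\item Every vertex of $N_2$ has all $p-1$ of its neighbours in $N_1$, contradicting your per-vertex edge count.
\item $|N_2|<|N_1|$, contradicting your expansion inequality.
\end{itemize}
For odd $t$ there is no sparsity at all between consecutive layers. No weakening of the constants rescues the approach either. Since $K_{p-1,p-1}$ is not even $o(p)$-degenerate, a degeneracy argument can never give better than $N\approx p^2$. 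The content of the theorem, and what Case~1 of the proof of Theorem~\ref{cycle-epsion-p2} needs, is the exponent $1+1/\ell$.

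The missing idea is to delete an independent set together with its neighbourhood, rather than a single closed neighbourhood. Put $x=p^{1/\ell}$ and $D=(t-2)(x+2)+1$, so that $Dp$ is exactly the claimed bound. It suffices that every induced subgraph $S$ of $R$ contains a nonempty independent set $I$ with $|I\cup N_S(I)|\le D|I|$. Removing $I\cup N_S(I)$ and iterating then produces an independent set of size at least $N/D$. To find $I$, run BFS in $S$ from any vertex $v$.
\begin{itemize}
\item If $|N_1|\le D-1$, take $I=\{v\}$.
\item Otherwise, suppose some $1\le i<\ell$ has $|N_{i+1}|\le x|N_i|$, and take the least such $i$. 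Minimality gives $|N_{i-1}|\le|N_i|$. Any independent set $I\subseteq N_i$ with $|I|\ge|N_i|/(t-2)$ then satisfies $|I\cup N_S(I)|\le|N_{i-1}|+|N_i|+|N_{i+1}|\le(x+2)|N_i|\le D|I|$.
\item If there is no such $i$, then $|N_\ell|>x^{\ell-1}(t-2)(x+2)\ge(t-2)p$. An independent set of size $|N_\ell|/(t-2)$ inside $N_\ell$ then contradicts $\alpha<p$.
\end{itemize}
In $K_{p-1,p-1}$ this can simply pick a whole side, with ratio $2$.

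So the only $C_t$-specific input needed is an independence bound inside a single layer, $\alpha(S[N_i])\ge|N_i|/(t-2)$ for $1\le i\le\ell$. Nothing about edges between layers is used. For $i=1$ this is your path argument plus Erd\H{o}s--Gallai. For $i\ge2$ it needs a genuine proof rather than the $i=1$ trick. For instance, with $t=7$, take a root $y$ with neighbours $a,b,c$, a path $x_0x_1\cdots x_5$ forming $N_2(y)$, and edges $ax_0,ax_3,bx_1,bx_4,cx_2,cx_5$. This graph is $C_7$-free, yet its second layer contains a path on $t-1$ vertices.
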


Equipped with these preliminary results, we turn to the proof of our main theorem.

Let $k\ge2$. Fix a sufficiently large constant $B_k\geq 20k^{2}$ so that for every graph $G$ on $p$ vertices with $m\le3^{20}$ edges and no isolated vertices, we have
\[r(C_{2k+1},G)\leq 2m\left(1+B_km^{-1/20}\right)+p\;.\]
We shall prove by induction on $p$ that every graph $G$ on $p$ vertices with $m$ edges and no isolated vertices has $r(C_{2k+1},G)\leq N := 2m\left(1+B_km^{-1/20}\right)+p$. Note that by our choice of $B_k$ this clearly holds whenever $m\le3^{20}$; in particular, this covers all $p\le3^{10}$ and gives a base case for the induction. Hence, we assume that $m>3^{20}$. Without loss of generality, we may also assume that $G$ is connected because $r(C_{2k+1},G_{1}\cup G_{2}) \leq \max\{r(C_{2k+1},G_{1}) + r(C_{2k+1},G_{2})\}$ holds for any disjoint union $G_1\cup G_2$ of graphs. We proceed with the following two cases.
%\setcounter{case}{0}
%\begin{case}
%$m\geq(2k-1)p^{1+\max\{\frac{1}{k},\frac{1}{19}\}}$. 
%\end{case}
\vskip 2mm
{\bf Case 1.} \emph{$m\geq(2k-1)p^{1+\max\{\frac{1}{k},\frac{1}{19}\}}$.}

By Theorem~\ref{Erdos-Faudree-Rousseau-Schelp-1978}, we have $r(C_{2k+1},G)\leq r(C_{2k+1},K_{p})\leq (2k-1)p^{1+\frac{1}{k}}+(4k-1)p\leq N.$

%\begin{case}
%$m<(2k-1)p^{1+\max\{\frac{1}{k},\frac{1}{19}\}}$.
%\end{case}
\vskip 2mm
{\bf Case 2.} \emph{$m<(2k-1)p^{1+\max\{\frac{1}{k},\frac{1}{19}\}}$.}

Take a red/blue-colored copy $\mathbf{K}$ of $K_{N}$ and suppose that it contains neither a red copy of $C_{2k+1}$ nor a blue copy of $G$. Obtain $G'$ from $G$ by deleting a minimum degree vertex $v$. Since $|V(G')|=p-1$, by the induction hypothesis $\mathbf{K}$ contains a blue copy $\mathbf{L}$ of $G'$. Let $X$ be the set of vertices of $\mathbf{K}$ not in $\mathbf{L}$ and $Y$ be the set of vertices in $\mathbf{L}$ corresponding to the neighbors of $v$ in $G$.
For $u\in Y$ write $R(u)$ for the set of vertices in $X$ joined to $u$ by a red edge. Note that each vertex $x\in X$ is joined by a red edge to at least one vertex in $Y$. Indeed, any vertex $x\in X$ with no red edge to $Y$ could be used to extend $\mathbf{L}$ to a blue copy of $G$, giving a contradiction. Hence, we have
\begin{equation}\label{e2}
X \subseteq \bigcup_{u\in Y}R(u)\;.
\end{equation}

We prove the following claim.
\begin{claim}\label{e3}
For all $u\in Y$ we have $|R(u)|< r(P_{2k},G)$.
\end{claim}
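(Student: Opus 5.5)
We argue by contradiction. Fix $u\in Y$ and suppose that $|R(u)|\ge r(P_{2k},G)$. The set $R(u)$ spans a red/blue-colored complete graph inside $\mathbf{K}$, so by the definition of the Ramsey number, $\mathbf{K}[R(u)]$ contains either a red copy of $P_{2k}$ or a blue copy of $G$. The second alternative contradicts our standing assumption that $\mathbf{K}$ contains no blue copy of $G$. It therefore suffices to show that a red $P_{2k}$ inside $R(u)$ produces a red $C_{2k+1}$.

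For this we use the defining property of $R(u)$: every vertex of $R(u)$ is joined to $u$ by a red edge. Let $x_1x_2\cdots x_{2k}$ be a red path on $2k$ vertices in $R(u)$. Since $R(u)\subseteq X$ while $u\in Y\subseteq V(\mathbf{L})$, the vertex $u$ does not lie on the path. Hence
\[u\,x_1\,x_2\cdots x_{2k}\,u\]
is a cycle on $2k+1$ distinct vertices. Its edges $ux_1$ and $x_{2k}u$ are red because $x_1,x_{2k}\in R(u)$, and its remaining edges are the red path edges. This is a red copy of $C_{2k+1}$, contradicting the assumption on $\mathbf{K}$. Thus $|R(u)|<r(P_{2k},G)$.

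There is no real obstacle here. The only points to check are that $u\notin R(u)$, which holds because $X$ and $V(\mathbf{L})$ are disjoint, and that the path has exactly $2k$ vertices, so that closing it through $u$ gives an odd cycle of length exactly $2k+1$. The substantive use of the claim comes afterwards. Combining it with \eqref{e2} gives $|X|\le |Y|\,(r(P_{2k},G)-1)$, and Lemma~\ref{path--} (with $\ell=2k\ge4$) bounds $r(P_{2k},G)$.
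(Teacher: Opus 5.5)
Your proof is correct and follows the paper's argument. A set $R(u)$ of size at least $r(P_{2k},G)$ must contain a red $P_{2k}$, since a blue $G$ is excluded, and closing that path through $u$ gives a red $C_{2k+1}$; your check that $u\notin R(u)$ because $X\cap V(\mathbf{L})=\emptyset$ is a detail the paper leaves implicit.
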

\begin{claimproof}
Suppose for a contradiction that $|R(u)|\geq r(P_{2k},G)$. Since $\mathbf{K}$ does not contain a blue copy of $G$, the lower bound on $|R(u)|$ implies that there is a red copy $\mathbf{P}$ of $P_{2k}$ on $R(u)$. But now $u$ completes $\mathbf{P}$ to a red copy of $C_{2k+1}$, giving a contradiction.
\end{claimproof}

By~\eqref{e2} and Claim~\ref{e3}, we have
\begin{equation*}
N  = (p-1)+|X| \leq (p-1)+\sum_{u\in Y}|R(u)| < p+|Y|\cdot r(P_{2k},G)\;.
\end{equation*}
Now by applying Lemma~\ref{path--} with $\beta=m^{1/20}\ge3$, we obtain
\begin{equation*}
N < p + |Y|p\left(1+m^{-1/20}\right)+2|Y|k(m^{1/20}+1)^{2}\sqrt{m}
\end{equation*}
Observe that the case condition gives $p^{-1}<\left(\frac{2k-1}{m}\right)^{1-\max\{\frac{1}{k+1},\frac{1}{20}\}}$ and we have $|Y|\le2m/p$ because $v$ is a minimum degree vertex of $G$. By applying these observations, we obtain
\begin{align*}
N & < p + |Y|p\left(1+m^{-1/20}\right) + 2|Y|k(m^{1/20}+1)^{2}\sqrt{m} \\
& \le p + 2m\left(1+m^{-1/20}+8km^{3/5}p^{-1}\right) \\
& \le p + 2m\left(1+m^{-1/20}+16k^2m^{-1/15}\right) \le N,
\end{align*}
which gives the required contradiction. This completes the proof of Theorem \ref{cycle-epsion-p2}. \hfill$\Box$

\subsection{Proof of Theorem~\ref{multi-kr}}
We proceed by double induction on $r$ and $p=|V(G)|$. For all $r\ge3$ let
\begin{equation} \label{eq:c_r}
c_{r} := 2^{r-1}\;.
\end{equation}
For the base case $r=3$, we have $r(K_{3},G)\leq 2m+1\leq 3m$ from a result of Sidorenko \cite{Sidorenko}. Now assume $r>3$ and that the theorem holds for smaller values of $r$. For the base case $p=2$, we have $r(K_{r},K_{2}) = r \leq c_r$. Now further assume $p>2$ and that the theorem holds for smaller values of $p$. Since $r(K_{r};G_{1}\cup G_{2})\leq r(K_{r};G_{1})+r(K_{r};G_{2})$ holds for any disjoint union $G_1\cup G_2$ of graphs, we may assume without loss of generality that $G$ is connected.

Take a red/blue-colored copy $\mathbf{K}$ of $K_{N}$ on $N\geq c_{r}m^{\frac{r-1}{2}}$ vertices. Suppose it has neither a red copy of $K_{r}$ nor a blue copy of $G$. Obtain $G'$ from $G$ by deleting a minimum degree vertex $v$. Since $|V(G')|=p-1$, by the induction hypothesis $\mathbf{K}$ contains a blue copy $\mathbf{L}$ of $G'$. Let $X$ be the set of vertices of $\mathbf{K}$ not in $\mathbf{L}$ and $Y$ be the set of vertices in $\mathbf{L}$ corresponding to the neighbors of $v$ in $G$.
For $u\in Y$ write $R(u)$ for the set of vertices in $X$ joined to $u$ by a red edge. Note that each vertex $x\in X$ is joined by a red edge to at least one vertex in $Y$. Indeed, we could use any vertex $x\in X$ with no red edge to $Y$ to extend $\mathbf{L}$ to a blue copy of $G$, giving a contradiction. Hence, we have
\begin{equation}\label{e8}
X \subseteq \bigcup_{u\in Y}R(u)\;.
\end{equation}

We prove the following claim.
\begin{claim}\label{label2}
For all $u\in Y$ we have $|R(u)|< r(K_{r-1},G)$.
\end{claim}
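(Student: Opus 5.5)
We argue by contradiction, exactly in the spirit of Claim~\ref{e3} in the proof of Theorem~\ref{cycle-epsion-p2}. Fix $u\in Y$ and suppose that $|R(u)|\ge r(K_{r-1},G)$. We look at the red/blue colouring that $\mathbf{K}$ induces on the complete graph with vertex set $R(u)$. By the definition of the Ramsey number $r(K_{r-1},G)$, this colouring contains either a red copy of $K_{r-1}$ or a blue copy of $G$, and we treat the two outcomes separately.

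If the induced colouring contains a blue copy of $G$, this copy also lies in $\mathbf{K}$. That contradicts our standing assumption that $\mathbf{K}$ has no blue copy of $G$.

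If instead there is a red copy $\mathbf{Q}$ of $K_{r-1}$ inside $R(u)$, we use that every vertex of $R(u)$ is joined to $u$ by a red edge. We also note $u\notin R(u)$, because $u\in V(\mathbf{L})$ while $R(u)\subseteq X$ is disjoint from $V(\mathbf{L})$. So adding $u$ to $\mathbf{Q}$ gives a red copy of $K_r$ in $\mathbf{K}$, which again contradicts our assumptions. Both outcomes are impossible, so $|R(u)|<r(K_{r-1},G)$ for every $u\in Y$.

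No step here is genuinely difficult. The only points to check are that $r(K_{r-1},G)$ is a well-defined finite number (true by Ramsey's theorem; the inductive hypothesis on $r$ even gives the explicit bound $c_{r-1}m^{(r-2)/2}$, which we will need later when this claim is combined with \eqref{e8}), and that $u$ lies outside $R(u)$, which we verified above.
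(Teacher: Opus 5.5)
Your proof is correct and is the same argument as the paper's: apply the definition of $r(K_{r-1},G)$ inside $R(u)$, rule out a blue $G$, and extend the resulting red clique by $u$. The paper's written proof actually says it finds a red edge in $R(u)$ and hence a red $K_3$, which looks like a slip carried over from the triangle case; your version, with a red $K_{r-1}$ in $R(u)$ extended by $u$ to a red $K_r$, is the correctly stated form of the argument.
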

\begin{claimproof}
Suppose for a contradiction that $|R(u)|\geq r(K_{r-1},G)$. Since $\mathbf{K}$ does not contain a blue copy of $G$, the lower bound on $|R(u)|$ implies that there is a red edge on $R(u)$. But this together with $u$ gives a red copy of $K_3$, which gives a contradiction.
\end{claimproof}

By~\eqref{e8} and Claim~\ref{label2}, we obtain
\begin{equation*}
N  = (p-1)+|X| \leq (p-1)+\sum_{u\in Y}|R(u)| < p+|Y|\cdot r(K_{r-1},G)\;.
\end{equation*}
Since $v$ is a minimum degree vertex of $G$, we have $|Y|\le\sqrt{2m}$. By applying this together with the bound on $r(K_{r-1},G)$ from the inductive hypothesis, $p\le m+1$ and~\eqref{eq:c_r}, we get
\begin{equation*}
N < p + |Y|\cdot r(K_{r-1},G) \le p+\sqrt{2m}\cdot c_{r-1}m^{\frac{r}{2}-1} \le c_{r}m^{\frac{r-1}{2}} \le N\;,
\end{equation*}
which gives the required contradiction.
\hfill$\Box$
\subsection{Proof of Theorem~\ref{multi-k3}}
We proceed by double induction on $k$ and $p=|V(G)|\ge2$. For all $k\ge1$ let
\begin{equation} \label{eq:c_k}
    c_{k}:=3\cdot2^{k-1}\cdot k!\;.
\end{equation}
For the case $k=1$, we have $r_{2}(K_{3};G)\leq 2m+1\leq c_{1}m$ from a result of Sidorenko \cite{Sidorenko}. Now assume $k>1$ and that the theorem holds for smaller $k$. For the case $p=2$, we have $r_{k+1}(K_{3};K_{2}) = r_k(K_{3};K_{3})\leq 3\cdot k!\leq c_{k}$ from a result of Greenwood and Gleason~\cite{Greenwood-Gleason}. We further assume $p>2$ and that the theorem holds for smaller $p$. Without loss of generality, we may assume that $G$ is connected because $r_{k+1}(K_{3};G_{1}\cup G_{2})\leq r_{k+1}(K_{3};G_{1})+r_{k+1}(K_{3};G_{2})$ holds for any disjoint union $G_1\cup G_2$ of graphs.

For $N\geq c_{k}m^{\frac{k+1}{2}}$ take a copy $\mathbf{K}$ of $K_{N}$ edge-colored with colors $1,\dots,k+1$. Supposethat it contains neither a monochromatic copy of $K_{3}$ in some color $i\in\{1,\ldots,k\}$ nor a copy of $G$ in color $k+1$. Let $G'$ be obtained from $G$ by deleting a minimum degree vertex $v$. Since $|V(G')|=p-1$, by the induction hypothesis $\mathbf{K}$ contains a copy $\mathbf{L}$ of $G'$ in color $k+1$. Let $X$ be the set of vertices of $\mathbf{K}$ not in $\mathbf{L}$ and $Y$ be the set of vertices in $\mathbf{L}$ corresponding to the neighbors of $v$ in $G$. 
% \begin{align}\label{e4}
% N=(p-1)+|X|.
% \end{align}
For $1\le i\le k$ and $u\in Y$ write $W_{i}(u)$ for the set of vertices in $X$ joined to $u$ by an edge in color $i$. Note that each vertex $x\in X$ is joined to at least one vertex in $Y$ by an edge not in color $k+1$. Indeed, we could use any vertex $x\in X$ for which this does not hold to extend $\mathbf{L}$ to a copy of $G$ in color $k+1$, giving a contradiction. Hence, we have
\begin{equation}\label{e5}
X \subseteq \bigcup_{u\in Y}\left(\bigcup_{i\in \{1,\ldots,k\}}W_{i}(u)\right).
\end{equation}

We have the following claim.
\begin{claim}\label{label1}
For all $i\in\{1,\dots,k\}$ and $u\in Y$ we have $|W_{i}(u)|< r_{k}(K_{3};G)$.
\end{claim}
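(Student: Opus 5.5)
We argue by contradiction, following the template of Claim~\ref{label2} and Claim~\ref{e3}. Fix $i\in\{1,\dots,k\}$ and $u\in Y$, and suppose that $|W_i(u)|\ge r_k(K_3;G)$. Consider the complete graph $\mathbf{K}[W_i(u)]$ with the colouring inherited from $\mathbf{K}$. The key observation is that no edge inside $W_i(u)$ can have colour $i$: if some $x,y\in W_i(u)$ were joined by an edge of colour $i$, then $u,x,y$ would span a triangle in colour $i$, since both $ux$ and $uy$ have colour $i$ by the definition of $W_i(u)$. This contradicts our standing assumption that $\mathbf{K}$ has no monochromatic $K_3$ in any of the colours $1,\dots,k$.

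So the colouring of $\mathbf{K}[W_i(u)]$ uses only the $k$ colours in $\{1,\dots,k+1\}\setminus\{i\}$. Of these, $k-1$ are among the first $k$ colours and the last one is colour $k+1$. Relabel the remaining colours from $\{1,\dots,k\}\setminus\{i\}$ as $1,\dots,k-1$ and keep colour $k+1$ as the last colour, now renamed colour $k$. This gives a $k$-colouring of a complete graph on at least $r_k(K_3;G)$ vertices. By the definition of $r_k(K_3;G)$, it contains either a monochromatic $K_3$ in one of the first $k-1$ colours or a copy of $G$ in the last colour.

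In $\mathbf{K}$, the first outcome is a monochromatic triangle in some colour of $\{1,\dots,k\}\setminus\{i\}$. The second outcome is a copy of $G$ in colour $k+1$. Both contradict our assumption on $\mathbf{K}$, so $|W_i(u)|<r_k(K_3;G)$.

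There is no real obstacle here. The only point needing care is the bookkeeping of the colour relabelling, so that the reduced colouring genuinely has $k$ colours with colour $k+1$ playing the role of the distinguished last colour. When $k=1$ the argument still works: $W_1(u)$ then spans only colour $2$ edges, and $r_1(K_3;G)=|V(G)|$ by convention, so $W_1(u)$ would contain $G$ in colour $2$. This case is not needed anyway, because the induction assumes $k>1$.
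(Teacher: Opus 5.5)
Your proof is correct and is essentially the paper's argument. The paper also notes that, since there is no triangle in the colours of $\{1,\dots,k\}\setminus\{i\}$ and no colour-$(k+1)$ copy of $G$, the bound $|W_i(u)|\ge r_k(K_3;G)$ forces an edge of colour $i$ inside $W_i(u)$, and this edge closes a colour-$i$ triangle with $u$; you run the same reasoning in the opposite order and make the colour relabelling explicit, which the paper leaves implicit.
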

\begin{claimproof}
Suppose for a contradiction that $|W_{i}(u)|\geq r_{k}(K_{3};G)$. Since $\mathbf{K}$ contains neither a copy of $G$ in color $k+1$ nor a monochromatic copy of $K_{3}$ in some color $j\in\{1,\ldots,k\}\setminus\{i\}$, the lower bound on $|W_{i}(u)|$ implies that there is an edge on $W_{i}(u)$ with color $i$. But this together with $u$ gives a copy of $K_3$ in color $i$, which gives a contradiction.
\end{claimproof}
By~\eqref{e5} and Claim~\ref{label1}, we obtain
\begin{equation*}
N  = (p-1)+|X| \leq (p-1)+\sum_{u\in Y}\left(\sum_{i\in \{1,\ldots,k\}}|W_{i}(u)|\right) < p+|Y|\cdot k\cdot r_{k}(K_{3};G)\;.
\end{equation*}
Since $v$ is a minimum degree vertex of $G$, we have $|Y|\le\sqrt{2m}$. By applying this together with the bound on $r_{k}(K_{3};G)$ from the inductive hypothesis, $p\le m+1$ and~\eqref{eq:c_k}, we get
\begin{equation*}
N < p + |Y|\cdot k\cdot r_{k}(K_{3};G) \le p+\sqrt{2m}\cdot k\cdot c_{k-1}m^{\frac{k}{2}} \le c_{k}m^{\frac{k+1}{2}} \le N\;,
\end{equation*}
which gives the required contradiction.\hfill$\Box$

\section{Concluding remarks}
It is worth mentioning that
Erd\H{o}s, Faudree, Rousseau and Schelp asked the following Question \ref{Q2} which is to extend the result of Sidorenko on triangles to cycles of arbitrary length in the following, which was confirmed if $k=3,4,5$ in \cite{Radziszowski}. Cambie, Freschi, Morawski, Petrova and Pokrovskiy answered Question \ref{Q2}, provided $m$ is sufficiently large with respect to $k$ in \cite{Cambie-Freschi-Morawski-Petrova-Pokrovskiy}.
\begin{question}[\cite{Erdos-Faudree-Rousseau-Schelp}]\label{Q2}
Is $r(C_{k},G)\leq 2m+\lfloor\frac{k-1}{2}\rfloor$, where $k\geq 3$ and $G$ is a graph of size $m$ without isolated vertices?
\end{question}

Erd\H{o}s, Faudree, Rousseau and Schelp also asked the following question in \cite{Erdos,Erdos-Faudree-Rousseau-Schelp}.
\begin{question}[\cite{Erdos,Erdos-Faudree-Rousseau-Schelp}]\label{Q3}
Is $K_{3, 3}$ Ramsey size linear?
\end{question}

\noindent {\bf Acknowledgement.} The authors gratefully acknowledge the many helpful suggestions and discussions of Professor Hong Liu during the preparation of the paper. The second author also wishes to thank the ECOPRO group for their warm hospitality. The authors also wish to express their gratitude to Yulai Ma for his careful reading of the manuscript.
\vskip 2mm
\noindent {\bf Declaration of Competing Interests.}
The authors declare that they have no known competing financial interests or personal relationships that could have appeared to influence the work reported in this paper.
\bibliography{biblio}
\bibliographystyle{abbrv}

\end{document}